\documentclass[preprint]{elsarticle}

\usepackage{mathtools}
\usepackage[font=small,labelfont=bf]{caption}
\usepackage[all]{xy}
\usepackage{ifthen}

\journal{Discrete Mathematics}
\bibliographystyle{elsarticle-num}

\usepackage{amsthm}
\usepackage{amsmath}
\usepackage{amssymb}
\usepackage{xspace}
\usepackage{mathtools}
\usepackage[T1]{fontenc}
\usepackage{tikz-cd}
\usepackage{bbold}
\usepackage{cleveref}

\DeclareMathAlphabet{\mathsuet} {T1} {wesu}{bx}{sl}

\newcommand{\Fraisse}{Fra\"{i}ss\'{e}\xspace} 
\newcommand{\Jarik}{Ne\v set\v ril\xspace}
\newcommand{\aut}[1]{\mathrm{Aut}(#1)}
\newcommand{\age}[1]{\mathrm{Age}(#1)}
\newcommand{\bim}[1]{\mathrm{Bi}(#1)} 
\newcommand{\trg}{{\normalfont(}$\bigtriangleup${\normalfont )}\xspace}

\newcommand{\ctrg}{{\normalfont(}$\therefore${\normalfont)}\xspace}
\newcommand{\img}[1]{\mathrm{im}(#1)\xspace}	
\newcommand{\dom}[1]{\mathrm{dom}(#1)\xspace}	
\newcommand{\repr}[4]{\mathrm{#1(}#2\mathrm{)}_{#3}^{#4}}

\newcommand{\mpl}[1]{
   \ifthenelse{ \equal{#1}{} }
      {\ensuremath{\;\mathrm{M^+}}}
      {\ensuremath{\;\mathrm{M^+}(#1)}}
}
\newcommand{\mmn}[1]{
   \ifthenelse{ \equal{#1}{} }
      {\ensuremath{\;\mathrm{M^-}}}
      {\ensuremath{\;\mathrm{M^-}(#1)}}
}
\newcommand{\oo}[1]{{\sigma}(#1)}

\newcommand{\isotp}[1]{\mathfrak{#1}}

\newtheorem{theorem}{Theorem}
\newtheorem{lemma}[theorem]{Lemma}
\newtheorem{corollary}[theorem]{Corollary}
\newtheorem{proposition}[theorem]{Proposition}
\newtheorem{definition}[theorem]{Definition}

\newdefinition{construction}{Construction}
\newdefinition{remark}{Remark}
\newdefinition{example}{Example}
\newdefinition{notation}{Notation}
\newtheorem{question}[theorem]{Question}
\newtheorem{observation}[theorem]{Observation}
\newtheorem{claim}[theorem]{Claim}
\newtheorem{fact}[theorem]{Fact}

\begin{document}
\begin{frontmatter}
\title{IB-homogeneous graphs}

\author[ia]{Andr\'{e}s Aranda}
\address[ia]{Katedra Aplikovan\'e Matematiky, Univerzita Karlova. Malostransk\'e n\'am\v{e}st\'i 25, Praha 1.}
\ead{andres.aranda@gmail.com}
\begin{abstract}
The Lachlan-Woodrow Theorem identifies ultrahomogeneous graphs up to isomorphism. Recently, the present author and D. Hartman classified MB-homo\-geneous graphs up to bimorphism-equivalence. We extend those results in this paper, showing that every IB-homogeneous graph is either ultrahomogeneous or MB-homogeneous, thus classifying IB-homogeneous graphs.
\end{abstract}

\begin{keyword}
homomorphism-homogeneity \sep morphism-extension classes \sep IB-homogeneity
\MSC[2010] 03C15\sep 05C60\sep 05C63\sep 05C69\sep 05C75
\end{keyword}

\end{frontmatter}
\section{Introduction}
A relational structure $M$ is \emph{ultrahomogeneous} if every isomorphism between finite substructures of $M$ is a restriction of an automorphism of $M$. Classic examples of ultrahomogeneous structures include an unstructured set, the ordered rational numbers, the universal homogeneous partial order, and the Rado graph. Ultrahomogeneity has been studied heavily due to its connections to group theory (via the study of oligomorphic permutation groups and extremely amenable groups), model theory (elimination of quantifiers, $\omega$-categoricity, and related constructions such as Hrushovski's), combinatorics (structural Ramsey theory), and even topological dynamics (the Kechris-Pestov-Todor\v{c}evi\'c connection).  

\Fraisse's theorem \cite{Fraisse:1953} establishes a correspondence between hereditary classes of finite (or finitely generated if function symbols are allowed) structures with only countably many isomorphism types, the Joint Embedding Property, and the Amalgamation Property on one hand, and ultrahomogeneous structures on the other. Since the theories of ultrahomogeneous structures with finitely many relations of each finite arity are omega-categorical (for an excellent overview of the area, see \cite{MACPHERSON20111599}), the focus is typically on the unique countable model, known as the \Fraisse limit of the class. 

Ultrahomogeneous structures often play a central role in different areas of mathematics. For example, the Rado graph is a model of the almost-sure theory of finite graphs and almost all (in the sense of either measure or Baire category) countable labelled graphs are isomorphic to it (for this and many other properties of the Rado graph, see \cite{Cameron2013}), Hall's universal group is the \Fraisse limit of the class of all finite groups, and Urysohn's universal metric space is the completion of the \emph{rational} Urysohn space, a \Fraisse limit. A more recent example is the construction of \Fraisse Fr\'echet spaces by Kawach and L\'opez-Abad \cite{kawach2021fraisse}.

Ultrahomogeneity is therefore a strong condition with many desirable consequences. Thus, we would like to know as much as possible about the ultrahomogeneous members of each class of structures. This idea led to classification projects including the classification of binary stable ultrahomogeneous structures (see \cite{lachlan1986binary} and \cite{lachlan1986binary2}), the classification of ultrahomogeneous graphs \cite{LachlanWoodrow:1980}, the classification of ultrahomogeneous partial orders \cite{schmerl1979countable}, and the classification of ultrahomogeneous directed graphs \cite{cherlin1998classification}, among others. 

The notion of homomorphism-homogeneity was introduced by Cameron and \Jarik in \cite{CameronNesetril:2006} as a variation on ultrahomogeneity in which homomorphisms between finite substructures of $M$ are restrictions of endomorphisms of $M$.

Later, Lockett and Truss \cite{LockettTruss:2014} introduced finer distinctions in the class of homo\-morphism-homogeneous $L$-structures, characterized by the type of homomorphism between finite induced substructures of $M$ and the type of endomorphism to which such homomorphisms can be extended. In total, they introduced 18 \emph{morphism-extension classes}, partially ordered by inclusion. 

We call a relational structure $M$ XY-homogeneous if every X-morphism between finite induced substructures of $M$ extends to a Y-morphism $M\to M$, where $\mathrm{X\in\{I,M,H\}}$ and $\mathrm{Y\in\{H,I,A,E,B,M\}}$. The meaning of these symbols is as follows:
\begin{itemize}
\setlength\itemsep{0em}
\item[$\ast$]{H: homomorphism.}
\item[$\ast$]{M: monomorphism (injective homomorphism).}
\item[$\ast$]{I: isomorphism; an isomorphism $M\to M$ is also called a self-embedding.}
\item[$\ast$]{A: automorphism (surjective isomorphism $M\to M$).}
\item[$\ast$]{E: epimorphism (surjective homomorphism $M\to M$).}
\item[$\ast$]{B: bimorphism (surjective monomorphism $M\to M$).}
\end{itemize}

For example, ultrahomogeneous structures are IA-homogeneous structures in this formulation, and the homomorphism-homogeneous structures of Cameron and \Jarik are our HH-homogeneous structures. The partial order of morphism-extension classes of a general class of countable relational structures is presented in Figure \ref{fig:ctblestrs}.
\begin{figure}[h!]
\centering
\includegraphics[scale=0.9]{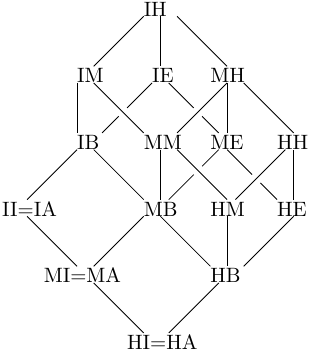}
\caption{Morphism-extension classes of countable structures, partially ordered by $\subseteq$.}
\label{fig:ctblestrs}
\end{figure}

Each of these morphism-extension classes poses the question, to what extent is it possible to replicate the theory of ultrahomogeneous structures in the XY-homogeneous setting? Indeed, much of the study of homomorphism-homogeneity has to do with finding parallels for the central results in the study of ultrahomogeneous structures. Examples of this trend include:
\begin{enumerate}
\item{The quest for \Fraisse theorems for all the morphism-extension classes. The first analogues of the analogues of \Fraisse's theorem appeared in \cite{CameronNesetril:2006} and \cite{Pech2016towards}, and a more uniform approach that could be used to find \Fraisse theorems in most of the morphism-extension classes was identified by Coleman in \cite{Coleman:2018};}
\item{results about the oligomorphic automorphism groups of ultrahomogeneous structures inspired the study of oligomorphic endomorphism monoids of homomorphism-homo\-geneous structures, see for example in \cite{mavsulovic2011oligomorphic} and \cite{ColemanEvansGray:2019};}
\item{the Ryll-Nardzewski theorem and elimination of quantifiers led to attempts to find analogues in the wider class of homomorphism-homo\-geneous structures, see \cite{Pech2016towards}; and }
\item{classification projects, such as the classification of homomorphism-homo\-geneous partial orders \cite{LockettTruss:2014}, finite homomorphism-homogeneous graphs and tournaments with loops \cite{ilic2012homomorphism}, \cite{IlicMasulivicRajkovic2008}, and tournaments \cite{feller2020classification}, among others.}
\end{enumerate}

The present paper follows the same trend, and started as a case study to determine the analogue of \Fraisse's theorem for IB-homogeneous structures. In addition to yielding the IB-analogue of the Lachlan-Woodrow theorem, which we present here, the ideas from Section \ref{sec:represented} have been adapted to the general relational case and helped us to identify and prove analogues of \Fraisse's theorem for IB- and IM-homogeneous structures (\cite{arandaimfr}, in preparation). 

Our focus in this paper is on countably infinite IB-homogeneous simple undirected graphs, that is, countable graphs $G$ for which every isomorphism between finite induced subgraphs is a restriction of a bijective endomorphism $G\to G$ (a \emph{bimorphism} of $G$). 

So far a full classification of countable homomorphism-homogeneous graphs has eluded us. The present paper fills one of the many gaps in that classification project. At this point, we know the partial order of morphism-extension classes of countable graphs and countable connected graphs (Figure \ref{fig:extclassesgraphs}, proof in \cite{aranda2020poset}), as well as classifications of countable graphs for HI (only $K_\omega$), IA (the Lachlan-Woodrow theorem, included below as Theorem \ref{thm:lw} \cite{LachlanWoodrow:1980}), MI ($K_\omega$ and its complement), and MB (see \cite{ARANDA2020103063}). In this paper, we extend this partial classification to include IB-homogeneous graphs (Theorem \ref{thm:ibclass}).

The general structure of the proof is as follows: in Section \ref{sec:represented} we introduce \emph{represented monomorphisms} and show that in an IB-homogeneous structure $M$ these form the largest family of monomorphisms between finite substructures that are restrictions of bimorphisms of $M$. In Section \ref{sec:graphs} we apply what we learned in Section \ref{sec:represented} to graphs, and prove that an IB-homogeneous graph that represents the monomorphism mapping a nonedge to an edge is MB-homogeneous. The other option, where an IB-homogeneous graph does not represent that monomorphism, implies ultrahomogeneity. Since we have classifications for MB-homogeneous graphs and ultrahomogeneous graphs, this completes the classification of MB-homogeneous graphs. 

\begin{figure}[h!]
\centering
\includegraphics[scale=0.9]{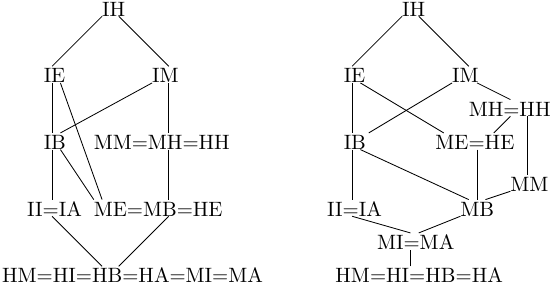}
\caption{Morphism-extension classes of countable connected graphs (left) and countable graphs, partially ordered by $\subseteq$.}
\label{fig:extclassesgraphs}
\end{figure}

\section{Represented Monomorphisms}\label{sec:represented}
In this section we will give an alternative characterization of IB-homogeneity in terms of a family of monomorphisms with finite domain which we call \emph{represented monomorphisms} (the formal definition will be given later).

We will need some notation and conventions. First, we clarify that all graphs in this paper are countable, undirected, and loopless. The age of a relational $L$-structure $M$ is usually defined as the class $\age{M}$ of all finite $L$-structures that embed into $M$. In this work, we will think of the age of $M$ as containing only one representative from each isomorphism type of finite structures embeddable into $M$. There is no real loss in this approach, and it has the salubrious effect of transforming statements about the proper class $\age{M}$ into statements about a countable set. The unique element of the age isomorphic to a finite induced substructure of $M$ will be denoted by the same letter as the substructure, but in Gothic typeface.

\begin{notation}~
\begin{enumerate}
\item{We will use $A\sqsubset M$ to indicate that $A$ is a finite subset of $M$. We identify $A\sqsubset M$ with the substructure induced on it.}
\item{The edge relation in a graph will be denoted by $\sim$. All graphs in this paper are simple and undirected.}
\item{A \emph{nonedge} in a graph $G$ is a pair of distinct vertices $x,y\in G$ with $x\not\sim y$.}
\item{We will denote the restriction of a function $F$ to a subset $X$ of its domain by $F_X$, or $F\upharpoonright X$ when other subindices are present.}
\item{The left inverse of an injective function $g$ will be denoted by $\overline{g}$. We reserve the notation $g^{-1}$ for two-sided inverses.}
\item{If $A\sqsubset M$, then $\isotp{A}$ is the unique element of $\age{M}$ isomorphic to $A$.}
\item{$\bim{M}$ is the bimorphism monoid of $M$.}
\end{enumerate}
\end{notation}

We reserve the name \emph{bimorphism} for bijective homomorphisms in which the domain and image are the same graph. When this condition is not met, we speak of \emph{bijective homomorphisms}.

\begin{example}\label{ex:1}~
\begin{enumerate}
\item{If $G$ is a finite graph, then $\bim{G}=\aut{G}$. This follows easily by counting edges; from this it follows that all the finite IB-homogeneous graphs are ultrahomogeneous. The finite ultrahomogeneous graphs were classified by Gardiner \cite{Gardiner:1976}.}
\item{The Rado graph is the unique countable graph $R$ with the following property: for all finite disjoint sets of vertices $A,B$ there exists some vertex $x$ such that $x\sim a$ for all $a\in A$ and $x\not\sim b$ for all $b\in B$. It easy to see that the isomorphism type of $R$ does not change if we add any finite set of edges to it. This means that there exist enumerations $R=\{a_i:i\in\omega\}=\{b_i:i\in\omega\}$ in which $a_i\sim a_j$ implies $b_i\sim b_j$ for all $i,j\in\omega$, but there are finitely many pairs $i,j$ for which $a_i\not\sim a_j$ and $b_i\sim b_j$. In this case the bijective function $f\colon a_i\mapsto b_i$ is is an example of a bimorphism $R\to R$ that is not an automorphism.}
\item{Consider the graphs $$G_0\coloneqq (\{a_0,b_0\};\varnothing), G_1\coloneqq (\{a_1,b_1\};\{(a_1,b_1),(b_1,a_1)\}).$$Then the function $m\colon G_0\to G_1$ mapping $a_0$ to $a_1$ and $b_0$ to $b_1$ is a bijective homomorphism. This particular homomorphism will play an important role in section \ref{sec:graphs}.}
\end{enumerate}
\end{example}

\begin{notation}
We adapt the notation from the last item of Example \ref{ex:1} to our convention of using Gothic letters for homomorphisms between elements of the age of a graph, and reserve the symbol $\isotp{m}$ to denote a bijective graph homomorphism mapping a nonedge to an edge.
\end{notation}

When compared with other morphism-extension classes, the six classes at the top of the hierarchy in Figure \ref{fig:ctblestrs} (IH, IM, IE, MH, IB, ME) have a mismatch between the type of local homomorphism and the type of endomorphism. To see what we mean by this, note that the finite restrictions of an endomorphism are homomorphisms; likewise, the finite restrictions of a injective endomorphism or a bimorphism will be monomorphisms. The mismatch lies in the fact that when XY$\in\{\mathrm{IH, IM, IE, MH, IB, ME}\}$, then the restrictions of an endomorphism of type Y is not necessarily in the class of local homomorphisms X. 

What the mismatch tells us is that we should not focus exclusively on local X-morphisms, but accept a larger class of local homomorphisms. We call these homomorphisms \emph{represented}, and define them formally below.

Given any $G$, we will think of $\age{G}$ as the set of objects of a category. The morphisms of this category are the total monomorphisms between elements of $\age{G}$. The vertex set of any $\isotp{A}\in\age{G}$ is disjoint from $G$, and we think of the arrows of $\age{G}$ as archetypes of local homomorphisms in $G$. We reflect this convention in our notation by using Gothic typeface for elements of the age and arrows between them.

\begin{definition}\label{def:repr}
Let $M$ be a relational structure, $\isotp{A,B}\in\age{M}$, and $e_A\colon\isotp{A}\to M, e_B\colon\isotp{B}\to M$ be embeddings with images $A,B$ respectively.
\begin{enumerate}
\item{A monomorphism $\isotp{f}\colon\isotp{A}\to\isotp{B}$ is \emph{manifested by} $f\colon A\to B$ \emph{over} $e_A,e_B$, if $f\circ e_A=e_B\circ \isotp{f}$. We also say that $f$ is a \emph{manifestation} of $\isotp{f}$ in this situation.}
\item{A monomorphism $\isotp{f}\colon\isotp{A}\to\isotp{B}$ is \emph{represented in} $\bim{M}$ \emph{over} $e_A,e_B$ if there exists a bimorphism $F\in\bim{M}$ such that $\isotp{f}$ is manifested by $F_A\colon A\to B$. We will also say that $F$ \emph{represents} $\isotp{f}$ \emph{over} $e_A,e_B$ in this situation.}
\item{Given two relational structures $A$ and $B$, $\repr{Mon}{A,B}{}{}$ denotes the set of all mono\-morphisms $A\to B$; similarly, $\repr{Emb}{A,B}{}{}$ denotes all embeddings $A\to B$.}
\item{Given $\isotp{A,B}\in\age{M}$, $e\in\repr{Emb}{\isotp{A},M}{}{},$ and $e'\in\repr{Emb}{\isotp{B},M}{}{}$, we use $\repr{Mon}{e,e'}{M}{Bi}$ to denote the set $$\{\isotp{f}\in\repr{Mon}{\isotp{A,B}}{}{}:\exists F\in\bim{M}(F_{\img{e}}\circ e=e'\circ\isotp{f})\},$$ that is, the set of monomorphisms from $\isotp{A}$ to $\isotp{B}$ represented in $\bim{M}$ over $e,e'$. We use $\repr{Mon}{\isotp{A}}{M}{Bi}$ for $$\bigcup\{\repr{Mon}{\isotp{A,B}}{M}{Bi}:\isotp{B}\in\age{M}\}.$$}
\end{enumerate}
\end{definition}

\begin{example}\label{ex:2}~
\begin{enumerate}
\item{In some cases, the bimorphism monoid of a graph equals its automorphism group. This is true of all finite graphs (Example \ref{ex:1}, item 1), and also of some infinite graphs. One of them is the universal homogeneous triangle-free graph $H_3$, which is the \Fraisse limit of all finite triangle-free graphs. Among countable graphs, $H_3$ is characterised by not embedding $K_3$ and the property that for all finite disjoint $A,B$ such that $A$ spans no edges, there exists a vertex $x\in H_3$ such that $x\sim a$ for all $a\in A$ and $x\not \sim b$ for all $b\in B$. To see that all bimorphisms of $H_3$ are automorphisms, suppose for a contradiction that $H_3$ had a bimorphism $B\in \bim{H_3}\setminus \aut{H_3}$, then $B$ would map some nonedge $a\not\sim b$ to an edge $B(a)\sim B(b)$, but by the axioms of $H_3$ we know that $a$ and $b$ have a common neighbour $c$, and by virtue of being a homomorphism $B$ maps $c$ to a common neighbour of $B(a)$ and $B(b)$, which is impossible since $H_3$ does not embed a triangle. In the case of $H_3$, the only monomorphisms represented in $\bim{H_3}$ are embeddings.}
\item{On the other hand, we know that the Rado graph has bimorphisms outside its automorphism group (Example \ref{ex:1}, item 2). It follows that $\isotp{m}$ is represented in $\bim{R}$ over some embeddings of a nonedge and an edge into $R$ (IB-homogeneity of $R$ and Proposition \ref{prop:homogeneity} below prove that $\isotp{m}$ is in fact represented in $\bim{R}$ over \emph{all} such pairs of embeddings).}
\item{It is proved in Example 3.11 in \cite{ColemanEvansGray:2019} that for any countable binary sequence $s\colon\omega\to 2$ starting with 0 and containing infinitely many 0's and 1's, the graph on $\omega$ with edge set $\{(i,j):\max\{i,j\}=1\}$ is MB-homogeneous. In particular, $\isotp{m}$ is represented in the bimorphism monoid of any such graph. In Example \ref{ex:3}, we use a graph of this type to produce a more complicated IB-homogeneous structure in which not all monomorphisms are represented.}
\end{enumerate}
\end{example}

Example \ref{ex:2} hints at a dichotomy: the bimorphism monoid of an IB-homo\-geneous graph either represents $\isotp{m}$ and the graph is MB-homogeneous, or does not represent $\isotp{m}$ and the graph is ultrahomogeneous. Section \ref{sec:graphs} contains a proof of this fact.

\begin{observation}
Let $M$ be an IB-homogeneous structure and suppose that $\isotp{f}\colon\isotp{A}\to\isotp{B}$ is represented in $\bim{M}$ over embeddings $e_A,e_B$ with images $A,B\sqsubset M$. Then for any pair of embeddings $e'_A\colon\isotp{A}\to A$ and $e'_B\colon\isotp{B}\to B$, the monomorphism $\isotp{f}$ is represented in $\bim{M}$ over $e'_A,e'_B$.
\end{observation}
\begin{proof}
Since the embeddings $e_A$ and $e'_A$ have the same $A\sqsubset M$ as image (and similarly for $e_B,e_B'$), there exist local isomorphisms $s\colon A\to A$ and $t\colon B\to B$ such that $s\circ e_A'=e_A$ and $t\circ e_B=e_B'$. 

Let $S$, $T$, and $F$ be bimorphisms of $M$ extending $s$, $t$ and $f$, respectively. Now,
\[
\begin{split}
(T\circ F\circ S)_A\circ e_A'&=T_B\circ F_A\circ (S_A\circ e_A')=t\circ (f\circ e_A)=\\
&=t\circ e_B\circ\isotp{f}=e'_B\circ\isotp{f}.
\end{split}
\]
This proves that $T\circ F\circ S\in\bim{M}$ represents $\isotp{f}$ over $e_A', e_B'$.
\end{proof}

As a consequence, whenever the ambient structure $M$ is IB-homogeneous, we will think of $e_A$ as an equivalence class of embeddings $\isotp{A}\to M$ with image $A$ (under the equivalence relation $e\approx e'$ if $\dom{e}=\dom{e'}$ and there exists some $\sigma\in\aut{\dom{e}}$ with $e'=e\circ\sigma$), rather than a single embedding.

We can use these notions to give an alternative definition of IB-homogeneity.

\begin{proposition}
A relational structure $M$ is IB-homogeneous if and only if for all $\isotp{A}\in\age{M}$, and all embeddings $e,e'\colon\isotp{A}\to M$, $\aut{\isotp{A}}\subseteq\repr{Mon}{e,e'}{M}{Bi}$. 
\end{proposition}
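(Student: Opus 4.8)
The plan is to unwind both definitions and set up a dictionary between local isomorphisms in $M$ and automorphisms of the archetypes in $\age{M}$, mediated by the chosen embeddings. The key observation is that, given embeddings $e,e'\colon\isotp{A}\to M$ with images $A=\img{e}$ and $A'=\img{e'}$, pre- and postcomposition with $e$ and $e'$ sets up a bijection between automorphisms $\alpha\in\aut{\isotp{A}}$ and isomorphisms $A\to A'$ of finite induced substructures of $M$.

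For the forward implication, suppose $M$ is IB-homogeneous and fix $\isotp{A}\in\age{M}$, embeddings $e,e'\colon\isotp{A}\to M$, and $\alpha\in\aut{\isotp{A}}$. I would form the composite $g=e'\circ\alpha\circ\overline{e}\colon A\to A'$, where $\overline{e}\colon A\to\isotp{A}$ is the inverse of $e$ on its image. As a composite of an isomorphism, an automorphism, and an isomorphism, $g$ is an isomorphism between the finite induced substructures $A$ and $A'$ of $M$, hence a local I-morphism. IB-homogeneity then supplies a bimorphism $F\in\bim{M}$ with $F_A=g$, and a direct computation gives $F_A\circ e=g\circ e=e'\circ\alpha$; thus $F$ represents $\alpha$ over $e,e'$, so $\alpha\in\repr{Mon}{e,e'}{M}{Bi}$.

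For the converse, assume the containment holds for every archetype and every pair of embeddings, and let $g\colon A\to B$ be an arbitrary isomorphism between finite induced substructures of $M$. Since $A\cong B$, they share a single archetype $\isotp{A}\in\age{M}$. I would choose embeddings $e,e'\colon\isotp{A}\to M$ with $\img{e}=A$ and $\img{e'}=B$ (these exist precisely because $A$ and $B$ are copies of $\isotp{A}$ inside $M$), and set $\alpha=\overline{e'}\circ g\circ e\in\aut{\isotp{A}}$. The hypothesis yields a bimorphism $F\in\bim{M}$ with $F_A\circ e=e'\circ\alpha$; substituting the definition of $\alpha$ and cancelling the bijection $e$ gives $F_A=g$, so $F$ extends $g$. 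As $g$ was arbitrary, $M$ is IB-homogeneous.

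The argument is essentially bookkeeping, so I do not anticipate a serious obstacle; the one point that requires care is the handling of the quantifier over all embeddings $e,e'$. In the forward direction the embeddings are given, whereas in the converse I must produce embeddings realising the prescribed images $A$ and $B$ and then check that $\alpha$ is a genuine automorphism of the common archetype $\isotp{A}$ rather than merely a monomorphism; this is exactly where I use that $g$ is an isomorphism and that $A$ and $B$ are isomorphic to the same representative in $\age{M}$.
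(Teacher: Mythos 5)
Your proof is correct and follows essentially the same route as the paper: in the forward direction you conjugate the automorphism by the embeddings to get a local isomorphism $e'\circ\alpha\circ\overline{e}$ and apply IB-homogeneity, and in the converse you reverse the conjugation, $\overline{e'}\circ g\circ e$, to produce an automorphism of the archetype, exactly as in the paper's argument. The only difference is cosmetic: you spell out the cancellation step showing $F_A = g$, which the paper leaves implicit.
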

\begin{proof}
Suppose $M$ is IB-homogeneous, and let $\isotp{s}\colon\isotp{A}\to\isotp{A}$ be an automorphism. Take any embeddings $e,e'\colon\isotp{A}\to M,$ and let $A,A'$ be their images. Then $i\coloneqq e'\circ\isotp{s}\circ\overline e\colon A\to A'$ is a local isomorphism, which by IB-homogeneity is a restriction of some bimorphism $I$. It follows that $I_A\circ e=e'\circ\isotp{s}$ and so $\isotp{s}\in\repr{Mon}{e,e'}{M}{Bi}$.

Now suppose that the condition from the statement is satisfied, and let $i\colon A\to A'$ be a local isomorphism. Let $\isotp{A}$ be the element of $\age{M}$ isomorphic to $A$ and $A'$, and choose isomorphisms $e\colon\isotp{A}\to A$ and $e'\colon\isotp{A}\to A'$ such that $i\circ e=e'$.

The function $\isotp{s}\coloneqq\overline{e'}\circ i\circ e\colon\isotp{A}\to\isotp{A}$ is an automorphism. By hypothesis, there exists $S\in\bim{M}$ that represents $\isotp{s}$ over $e,e'$, that is, $S_A\circ e=e'\circ\isotp{s}$. 

Then $S$ extends $i$, for 
\[
S_A\circ e=e'\circ\isotp{s}=e'\circ\overline{e'}\circ i\circ e=i\circ e.
\]

And the result follows
\end{proof}

\begin{proposition}\label{prop:homogeneity}
Suppose $M$ is an IB-homogeneous relational structure and $\isotp{A,B}\in\age{M}$. Let $i,i'\colon\isotp{A}\to M$ and $e,e'\colon\isotp{B}\to M$ be embeddings. Then $\isotp{f}\in\repr{Mon}{i,e}{M}{Bi}$ if and only if $\isotp{f}\in\repr{Mon}{i',e'}{M}{Bi}$.
\end{proposition}
\begin{proof}
Let $A,B$ be the images of $i,e$ and $A',B'$ be the images of $i',e'$. Suppose $\isotp{f}\in\repr{Mon}{i,e}{M}{Bi}$. Then there exists $F\in\bim{M}$ such that $F_A\circ i=e\circ\isotp{f}$. Since $i,i',e,e'$ are embeddings, there exist isomorphisms $j\colon A'\to A$ and $k\colon B\to B'$ which moreover satisfy $j\circ i'=i$ and $k\circ e=e'$. By IB-homogeneity, $j$ and $k$ are restrictions of bimorphisms $J,K$. 

We claim that $K\circ F\circ J$ represents $\isotp{f}$ over $i',e'$. Note that $(K\circ F\circ J)_A=k\circ F_A\circ j$.
\[
(K\circ F\circ J)_A\circ i'=k\circ F_A\circ j\circ i'=k\circ F_A\circ i=k\circ e \circ\isotp{f}=e'\circ\isotp{f},
\]
and $\isotp{f}$ is represented over $i',e'$. The same proof works in the other direction as well.
\end{proof}

The moral here is that IB-homogeneity implies uniformity of representation of monomorphisms, in the sense that if there exist $e,e'$ such that $\isotp{f}$ is represented over $e,e'$ in $\bim{M}$, then $\isotp{f}$ is represented over \emph{all} pairs of embeddings. In other words, if $M$ is an IB-homogeneous structure, then $\repr{Mon}{e,e'}{M}{Bi}$ depends not on the embeddings $e,e'$, but only on the isomorphism types of their domains. As we shall see below, in the case of a graph $G$ this means that whenever the monomorphism mapping a nonedge to an edge is represented in the bimorphism monoid of an IB-homogeneous graph, then every nonedge in $G$ can be mapped to an edge by some bimorphism of $G$; this, in turn, implies MB-homogeneity (Proposition \ref{lem:infcodep} for connected graphs with connected component), though the proof is not direct.

\section{IB-homogeneous graphs}\label{sec:graphs}
In this section we prove that any countable IB-homogeneous graph is either ultrahomogeneous or MB-homogeneous. It follows from the Lachlan-Woodrow theorem (Theorem \ref{thm:lw} below; see also \cite{LachlanWoodrow:1980}) and the classification of MB-homo\-geneous graphs in \cite{ARANDA2020103063} that all IB-homogeneous graphs are known up to bi\-morphism-equivalence.

The main theorem in this section is Theorem \ref{thm:ibclass}, an analogue of the Lachlan-Woodrow theorem. We start our analysis by establishing that the class of IB-homogeneous graphs is closed under complements (Lemma \ref{prop:complements}) and observing that for IB-homogeneous graphs if either $G$ or its complement is disconnected then $G$ is ultrahomogeneous. This focuses our attention on connected IB-homogeneous graphs with connected component. We then prove a dichotomy in Observation \ref{obs:ultrahom}, namely that either the monomorphism $\isotp{m}$ is represented in the bimorphism monoid of an IB-homogeneous graph, or the IB-homogneoeus graph is ultrahomogeneous. For the final steps, we use the technical Lemma \ref{lem:toclique} to show in Proposition \ref {lem:infcodep} that a non-ultrahomogeneous IB-homogeneous graph is MB-homogeneous; the proof of Proposition \ref {lem:infcodep} uses Fact \ref{fact:coleman}, a sufficient condition for MB-homogeneity in graphs.

The complement of a graph $G=(V,E)$ is $\overline G=(V,[V]^2\setminus E)$. Note that we assume $G$ and $\overline G$ have the same vertex set. 

We remind the reader that all graphs in this paper are countable, and by ``subgraph'' we always mean ``induced subgraph.''

\begin{definition}
Let $G,H$ be graphs. A function $f\colon G\to H$ is an antihomomorphism if $u\not\sim v$ in $G$ implies $f(u)\not\sim f(v)$. A bijective antihomomorphism $G\to G$ is an \emph{antibimorphism} of $G$.
\end{definition}

\begin{observation}
Let $G,H$ be graphs. A function $F\colon G\to H$ is a bijective homomorphism if and only if $F^{-1}$ is a bijective homomorphism $\overline H\to \overline G$.
\end{observation}
\begin{proof}
Suppose that $F$ is a bijective homomorphism. If $x\sim y$ in $\overline H$, then $x\not\sim y$ in $H$. Since $F$ is bijective and preserves edges, but may map a nonedge to any pair of distinct elements, the preimage of a nonedge is always a nonedge, so $F^{-1}(x)\not\sim F^{-1}(y)$ in $G$, or, equivalently, $F^{-1}(x)\sim F^{-1}(y)$ in $\overline{G}$. The same argument proves the other direction.
\end{proof}

Using $(F^{-1})^{-1}=F$ and $\overline{\overline G}=G$, the four conditions below are equivalent.

\begin{corollary}\label{obs:anti}
Let $G,H$ be graphs and suppose that $F:G\to H$ is a function. The following are equivalent:\
\begin{enumerate}
\item{$F$ is a bijective homomorphism $G\to H$,}
\item{$F^{-1}$ is a bijective homomorphism $\overline H\to\overline G$,}
\item{$F$ is a bijective antihomomorphism $\overline G\to\overline H$, and}
\item{$F^{-1}$ is a bijective antihomomorphism $H\to G$.}
\end{enumerate}
\end{corollary}

The easy proposition below will be in the background for the rest of the paper. 

\begin{proposition}\label{prop:leftinv}
If $G$ is an IB-homogeneous graph, then the left inverse of every finite represented monomorphism can be extended to an antibimorphism of $G$.
\end{proposition}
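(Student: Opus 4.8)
The plan is to unwind the definitions so that the left inverse in question becomes visibly a restriction of the two-sided inverse of a bimorphism, at which point Observation \ref{obs:anti} finishes the job.

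First I would fix notation for a finite represented monomorphism. By definition such a morphism is a manifestation $f = F_A \colon A \to M$, where $A \sqsubset M$ is finite and $F \in \bim{M}$ is a bimorphism witnessing that the corresponding abstract monomorphism lies in $\repr{Mon}{\isotp{A,B}}{M}{Bi}$. Thus $f$ is simply the restriction of the bijection $F$ to $A$, so it is injective with image $f(A) = F(A)$, and its left inverse $\overline{f}$ is the function on $f(A)$ determined by $\overline{f}(f(a)) = a$ for $a \in A$.

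The key step is to recognise $\overline{f}$ as a restriction of $F^{-1}$. Since $F$ is a bijection of $M$ it has a two-sided inverse $F^{-1} \colon M \to M$, and for every $a \in A$ we have $\overline{f}(F(a)) = a = F^{-1}(F(a))$; hence $\overline{f}$ and $F^{-1}$ agree on $F(A) = f(A)$, that is, $\overline{f} = (F^{-1})_{f(A)}$. It then remains only to observe that $F^{-1}$ is an antibimorphism of $M$: as the inverse of a bimorphism we have $F^{-1} \in \bip{M}$, and by Observation \ref{obs:anti} (the implication $(1) \Rightarrow (4)$ taken with $G = H = M$) every inverse of a bimorphism is an antibimorphism. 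Therefore $\overline{f}$ extends to the antibimorphism $F^{-1}$, as desired.

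I do not expect a genuine obstacle here, which matches the statement being billed as easy; the argument is essentially a diagram chase. The only points needing a little care are the elementary bookkeeping that the left inverse of a restriction of a bijection coincides with the restriction of the global inverse, and the explicit appeal to Observation \ref{obs:anti} to guarantee that inverting a bimorphism yields an antibimorphism rather than merely a bijection. It is worth noting that IB-homogeneity itself plays almost no role in the core argument: it is needed only so that the notation $\repr{Mon}{\isotp{A,B}}{M}{Bi}$ is well defined (via Proposition \ref{prop:manifestations}, representability being independent of the chosen embeddings), whereas the proof uses nothing beyond the existence of a single bimorphism $F$ restricting to $f$.
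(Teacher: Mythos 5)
Your proof is correct and is essentially the paper's own argument: the paper likewise takes the represented monomorphism $g$ to be the restriction of a bimorphism $G\in\bim{M}$ (via IB-homogeneity, exactly the point you flag about Proposition \ref{prop:manifestations}) and notes that $G^{-1}$ extends the left inverse $\overline{g}$, with Observation \ref{obs:anti} supplying that $G^{-1}$ is an antibimorphism. Your write-up just makes the bookkeeping steps explicit that the paper leaves implicit.
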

\begin{proof}
Let $f\colon X\to Y$ be a manifestation of a represented $\isotp{f}\colon\isotp{X}\to\isotp{Y}$ for some $\isotp{X},\isotp{Y}\in\age{G}$ . Since $G$ is IB-homogeneous, we know that $f$ is a restriction of some $F\in\bim{G}$. Note that $\overline{f}\colon\img{f}\to X$ is the restriction of $F^{-1}$ to $\img{f}$, so that $F^{-1}$ is an antibimorphism of $G$ (by Corollary \ref{obs:anti}) that extends $\overline{f}$.
\end{proof}

\begin{lemma}\label{prop:complements}
If $G$ is an IB-homogeneous graph, then so is $\overline G$.
\end{lemma}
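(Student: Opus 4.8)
The plan is to exploit the tight relationship between the bimorphisms of $G$ and those of $\overline{G}$ recorded in Remark \ref{rmk:whengish}, together with the fact that the notion of a local isomorphism is insensitive to complementation.

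First I would observe that the local isomorphisms of $G$ and of $\overline{G}$ are literally the same functions. Indeed, for any finite vertex set $X$ the induced substructure of $\overline{G}$ on $X$ is the complement of the induced substructure of $G$ on $X$, and a bijection between finite vertex sets is an isomorphism of the induced subgraphs precisely when it preserves both edges and nonedges. Since this condition is manifestly symmetric under exchanging the roles of edges and nonedges, it is invariant under passing to complements. Hence $i$ is a local isomorphism of $\overline{G}$ if and only if it is a local isomorphism of $G$.

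Next, let $i\colon A\to A'$ be an arbitrary local isomorphism of $\overline{G}$ that I wish to extend to a bimorphism of $\overline{G}$. By the previous paragraph, the inverse map $i^{-1}\colon A'\to A$ is also a local isomorphism of $\overline{G}$, and therefore a local isomorphism of $G$. Applying the IB-homogeneity of $G$ to $i^{-1}$, I obtain a bimorphism $F\in\bim{G}$ extending $i^{-1}$. Since a bimorphism is a bijection, its two-sided inverse $F^{-1}$ is a well-defined bijection of $G$, and Remark \ref{rmk:whengish} gives $F^{-1}\in\bim{\overline{G}}$. Because $F$ extends $i^{-1}$, the inverse $F^{-1}$ extends $(i^{-1})^{-1}=i$, so $F^{-1}$ is a bimorphism of $\overline{G}$ extending the given local isomorphism $i$. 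As $i$ was arbitrary, $\overline{G}$ is IB-homogeneous.

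I do not anticipate a genuine obstacle here, since the entire content lies in Observation \ref{obs:anti} and Remark \ref{rmk:whengish}, which convert bimorphisms of $G$ into bimorphisms of $\overline{G}$ via inversion. The only point requiring a little care is the appearance of the inverse map: extending $i$ directly by a bimorphism $F$ of $G$ would, by Observation \ref{obs:anti}, yield merely an \emph{antibimorphism} of $\overline{G}$ rather than a bimorphism. One must therefore extend $i^{-1}$ instead and take the inverse of the resulting bimorphism of $G$, which is exactly what makes Remark \ref{rmk:whengish} applicable.
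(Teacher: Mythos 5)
Your proposal is correct and follows essentially the same route as the paper: both pass to the inverse local isomorphism $i^{-1}$, viewed as a local isomorphism of $G$, extend it to a bimorphism $F\in\bim{G}$ by IB-homogeneity, and then invoke Remark \ref{rmk:whengish} to conclude that $F^{-1}\in\bim{\overline{G}}$ extends $i$. Your closing remark about why one must extend $i^{-1}$ rather than $i$ itself is exactly the point the paper's argument hinges on, just made explicit.
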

\begin{proof}
If $f\colon\overline X\to\overline Y$ is a finite isomorphism in $\overline G$, then so is $f^{-1}\colon Y\to X$, where we think of $X$ and $Y$ as embedded in $G$. By IB-homogeneity, $f^{-1}$ is represented in $G$, so by Proposition \ref{prop:leftinv} $f$ is a restriction of an antibimorphism $B\colon G\to G$. Now Corollary \ref{obs:anti} tells us that $B^{-1}$ is a bimorphism $\overline G\to\overline G$. Clearly, $B^{-1}$ extends $f$ and $\overline G$ is IB-homogeneous.
\end{proof}

Recall from \cite{ARANDA2020103063} that in an ambient graph $G$ we call a vertex $v$ a \emph{cone} over $X\subset G$ if $v\sim x$ for all $x\in X$. Similarly, we call $v$ a \emph{co-cone} over $X$ if $v\notin X$ and $v\not\sim x$ for all $x\in X$. 

From this point on, whenever $G$ is an IB-homogeneous graph and $\isotp{f}$ is mono\-morphism between elements of $\age{G}$, we write ``$G$ represents $\isotp{f\ }$'' to mean ``$\isotp{f}$ is represented in $\bim{G}$''.
\begin{observation}\label{obs:ultrahom}
If $G$ is an IB-homogeneous graph that does not represent $\isotp{m}$, then $G$ is ultrahomogeneous.
\end{observation}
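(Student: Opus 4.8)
The plan is to show that the hypothesis forces $\bim{G}=\aut{G}$, after which ultrahomogeneity falls out of IB-homogeneity with no further work. The whole argument is a matter of unwinding what ``does not represent $m$'' means for the global maps.

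First I would interpret the hypothesis. Since $m$ is the monomorphism carrying a nonedge to an edge, and $G$ is IB-homogeneous (so that representability does not depend on the choice of manifestation, by Proposition \ref{prop:manifestations} and the remark following it), ``$G$ does not represent $m$'' says precisely that no bimorphism of $G$ sends any nonedge to an edge. Equivalently, every $F\in\bim{G}$ maps nonedges to nonedges. In the language of Remark \ref{rmk:whengish}, this is the assertion that every $F\in\bim{G}$ is also an antibimorphism, i.e.\ $\bim{G}\subseteq\bip{G}$.

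The central claim is then that every bimorphism is an automorphism. Fix $F\in\bim{G}$: as a surjective monomorphism it is a bijection of the vertex set preserving edges, and by the previous step it preserves nonedges as well. It remains only to see that $F^{-1}$ preserves edges, and here is where the hypothesis is really used: if $\{u,v\}$ were an edge while $\{F^{-1}(u),F^{-1}(v)\}$ were a nonedge, then $F$ would carry that nonedge to the edge $\{u,v\}$, contradicting nonedge-preservation. Hence $F^{-1}$ is edge-preserving and $F\in\aut{G}$. (One can phrase this identically through Observation \ref{obs:anti}/Remark \ref{rmk:whengish}: an $F$ lying in both $\bim{G}$ and $\bip{G}$ preserves edges and nonedges, which is exactly the condition to be an automorphism, so $\bim{G}=\bim{G}\cap\bip{G}=\aut{G}$.)

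Finally I would combine this with IB-homogeneity: any local isomorphism $f$ of $G$ extends to some $F\in\bim{G}$, and by the claim $F\in\aut{G}$, so $f$ extends to an automorphism — which is ultrahomogeneity. I do not anticipate a genuine obstacle in this observation; the only point requiring care is verifying that a bijective homomorphism preserving nonedges has an edge-preserving inverse, and this is precisely the place where the assumption that $m$ is not represented does the work.
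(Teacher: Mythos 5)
Your proof is correct and takes essentially the same route as the paper's: both observe that since $m$ is not represented, no bimorphism of $G$ can map a nonedge to an edge, conclude that every element of $\bim{G}$ is an automorphism, and then invoke IB-homogeneity to extend every local isomorphism to an automorphism. The only difference is that you spell out the small verification (a bijection preserving both edges and nonedges is an automorphism) which the paper leaves implicit in the phrase ``it follows that all bimorphisms of $G$ are automorphisms.''
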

\begin{proof}
Every isomorphism between finite substructures extends to a bimorphism, which cannot map any nonedge to an edge, as in that case $\isotp{m}$ would be represented in $\bim{G}$. It follows that under these conditions the extension is always an automorphism.
\end{proof}

\begin{proposition}\label{prop:comprepm}
If $G$ is an IB-homogeneous graph that represents $\isotp{m}$, then $\overline G$ also represents $\isotp{m}$.
\end{proposition}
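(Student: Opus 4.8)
The plan is to exploit the complement duality packaged in Observation~\ref{obs:anti}: I will take a bimorphism of $G$ that witnesses the representation of $m$ and show that its inverse is a bimorphism of $\overline G$ witnessing the representation of $m$ in $\overline G$. Since the whole content of $m$ is that it sends a nonedge to an edge, and inverting a bimorphism of $G$ turns it into a bimorphism of $\overline G$ while swapping edges and nonedges, the inverse ought to do exactly what is required.

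First I would unwind what it means for $G$ to represent $m$. As $m$ is the monomorphism sending a nonedge to an edge, and (by the remarks following Proposition~\ref{prop:manifestations}) representation in an IB-homogeneous structure does not depend on the chosen embeddings, $G$ represents $m$ precisely when there exist a bimorphism $F\in\bim{G}$ and vertices $u\neq v$ with $u\not\sim v$ in $G$ but $F(u)\sim F(v)$ in $G$. Note $F$ is a surjective monomorphism, hence bijective, so $F^{-1}$ exists.

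Next I would pass to the complement. By Observation~\ref{obs:anti} (equivalently Remark~\ref{rmk:whengish}), $F\in\bim{G}$ gives $F^{-1}\in\bim{\overline G}$, so $F^{-1}$ is an honest bimorphism of $\overline G$. It then remains to check that $F^{-1}$ carries some nonedge of $\overline G$ to an edge of $\overline G$. Setting $a=F(u)$ and $b=F(v)$, the relation $F(u)\sim F(v)$ in $G$ reads $a\sim b$ in $G$, i.e. $a\not\sim b$ in $\overline G$, a nonedge of $\overline G$; and $F^{-1}(a)=u$, $F^{-1}(b)=v$ with $u\not\sim v$ in $G$, i.e. $u\sim v$ in $\overline G$, an edge of $\overline G$. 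Thus $F^{-1}$ manifests $m$ in $\overline G$, and $\overline G$ represents $m$.

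The argument is short, and the only genuine care needed is bookkeeping: keeping straight in which graph, $G$ or $\overline G$, each adjacency is being read, and remembering that $m$ runs from a nonedge to an edge, so that inverting the witness $F$ swaps the two roles exactly as needed. I also note that this direction uses only the existence of a single witnessing bimorphism, so it does not actually require the full IB-homogeneity of $\overline G$ (which is available in any case from Lemma~\ref{prop:complements}); it is precisely this that lets the inverse $F^{-1}$ serve directly as a witness, rather than having to be reconstructed from a local isomorphism.
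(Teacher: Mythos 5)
Your proof is correct and follows essentially the same route as the paper's: both take a bimorphism $F\in\bim{G}$ witnessing the representation of $m$, invoke Remark~\ref{rmk:whengish} (equivalently Observation~\ref{obs:anti}) to conclude $F^{-1}\in\bim{\overline G}$, and observe that complementation swaps the roles of the witnessing nonedge and edge so that $F^{-1}$ represents $m$ in $\overline G$. Your version merely spells out the adjacency bookkeeping that the paper compresses into one sentence.
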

\begin{proof}
Let $M$ be any bimorphism that represents $\isotp{m}$. Then by Corollary \ref{obs:anti}, the same permutation of vertices is an antibimorphism of $\overline G$ mapping an edge to a nonedge and $M^{-1}$ is a bimorphism of $\overline G$ that represents $\isotp{m}$.
\end{proof}

Our next result proves that in an IB-homogeneous graph $G$ that represents $\isotp{m}$, every monomorphism whose domain is an independent set, or whose image is a clique, is represented in the bimorphism monoid of $G$. 
\begin{lemma}\label{lem:toclique}
If $G$ is an IB-homogeneous graph that represents $\isotp{m}$, then for all $X\sqsubset G$ there exist bimorphisms $C$ and $E$ of $G$ such that the image of $X$ under $C$ is a clique and the preimage of $X$ under $E$ is an independent set. In particular, $G$ embeds arbitrarily large cliques and independent sets.
\end{lemma}
\begin{proof}
Suppose that $G$ is IB-homogeneous and represents $\isotp{m}$, and let $X\sqsubset G$ be any finite subset of $G$. 

If $X$ is a clique, then the identity bimorphism maps $X$ to a clique and we are done. Otherwise, there is a nonedge $x\not\sim y$ in $X$. Let $u\sim v$ be any edge of $G$. Since $\isotp{m}$ is represented in $\bim{G}$, the map $x\mapsto u, y\mapsto v$ is a restriction of a bimorphism $C_1$, by Proposition \ref{prop:homogeneity}. The image of $X$ under $F$ is a set of size $X$ with strictly more edges than $X$. Iterating this procedure we obtain a sequence of bimorphisms $C_1,\ldots, C_k$ such that $C_k\circ\ldots\circ C_1\in\bim{G}$ maps $X$ to a complete graph on $|X|$ vertices. 

By Proposition \ref{prop:comprepm}, $\isotp{m}$ is also represented in $\overline G$, so we can map the substructure $\overline X\sqsubset\overline G$ to a clique in $\overline G$ by a bimorphism $D\colon\overline G\to\overline G$. By Corollary \ref{obs:anti}, $E\coloneqq D^{-1}$ is a bimorphism $G\to G$, which clearly maps an independent set to $X$.
\end{proof}

The rest of our argument to classify IB-homogeneous graphs rests on the classification of MB-homogeneous graphs from \cite{ARANDA2020103063} and the Lachlan-Woodrow theorem. We need the definition of \emph{star number} from \cite{ARANDA2020103063}, as well as Properties \trg and \ctrg from \cite{ColemanEvansGray:2019}.

\begin{definition}
Let $G$ be a graph.
\begin{enumerate}
\item{The \emph{star number} of $G$ is $$\oo{G}=\sup\{n\colon K_{1,n}\in\age{G}\}$$
when that number is finite, and $\infty$ otherwise.}
\item{$G$ has Property \trg if every $X\sqsubset G$ has a cone.}
\item{$G$ has Property \ctrg if $\overline G$ satisfies \trg.}
\end{enumerate}
\end{definition}

\begin{fact}[Proposition 3.6 in \cite{ColemanEvansGray:2019}]\label{fact:coleman}
If a countable graph $G$ satisfies \trg and \ctrg, then $G$ is MB-homogeneous.
\end{fact}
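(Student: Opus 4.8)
The plan is to establish MB-homogeneity directly by a back-and-forth construction, building a bimorphism of $G$ that extends an arbitrary local monomorphism. Fix a local monomorphism $f\colon X\to Y$ between finite induced subgraphs and an enumeration $v_0,v_1,\dots$ of the (countable) vertex set of $G$. I would construct an increasing chain $f=f_0\subseteq f_1\subseteq\cdots$ of finite partial monomorphisms of $G$, alternating between \emph{forth} steps (which guarantee that every $v_k$ eventually enters the domain) and \emph{back} steps (which guarantee that every $v_k$ eventually enters the image), so that $F=\bigcup_n f_n$ is a bijection $G\to G$ preserving edges, i.e. a bimorphism extending $f$.

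The technical heart of both steps is the observation that \trg (resp. \ctrg) yields not just one cone (resp. co-cone) but infinitely many over any finite set. Indeed, if $A\sqsubset G$ had only finitely many cones $c_1,\dots,c_k$, then a cone over $A\cup\{c_1,\dots,c_k\}$, which exists by \trg, would be a cone over $A$ adjacent to itself, a contradiction; the dual argument in $\overline G$ turns \ctrg into infinitely many co-cones over $A$. This is the point I expect to matter most, since it is exactly what lets each one-point extension avoid the finitely many vertices already committed to the domain or image while still meeting the required adjacency constraints.

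For a forth step, suppose $f_n\colon X_n\to Y_n$ is built and we must place some $v\notin X_n$ in the domain. The extension $f_n\cup\{(v,w)\}$ is a monomorphism exactly when $w\notin Y_n$ and $w\sim f_n(x)$ for every $x\in X_n$ with $x\sim v$; that is, $w$ must be a cone over the finite set $f_n(\{x\in X_n:x\sim v\})$ and avoid $Y_n$. Property \trg together with the infinitude observation provides such a $w$. For a back step, suppose we must place some $w\notin Y_n$ in the image. Since a monomorphism need only preserve edges forward, the extension $f_n\cup\{(u,w)\}$ is a monomorphism exactly when $u\notin X_n$ and $u\not\sim x$ for every $x\in X_n$ with $w\not\sim f_n(x)$; that is, $u$ must be a co-cone over $\{x\in X_n:w\not\sim f_n(x)\}$ and avoid $X_n$. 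Property \ctrg supplies such a $u$.

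Finally, the alternation ensures $\dom{F}=\img{F}=G$, so $F$ is a bijection, and $F$ preserves edges because each $f_n$ does; hence $F\in\bim{G}$ extends $f$, and $G$ is MB-homogeneous. The only genuinely delicate point is the asymmetry between the two steps: the forth step constrains the \emph{image} of neighbours of the new vertex and so needs cones, while the back step constrains the \emph{preimage} of non-neighbours and so needs co-cones. This is precisely why both \trg and \ctrg are required, and why a one-directional property would not suffice.
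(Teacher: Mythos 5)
Your proof is correct and complete: the forth step needs a cone over the images of the new vertex's neighbours, the back step needs a co-cone over the preimages of the new target's non-neighbours, and your observation that \trg and \ctrg in fact yield \emph{infinitely many} cones and co-cones over any finite set is exactly what makes each one-point extension compatible with injectivity. The paper itself gives no proof of this statement---it is imported as a Fact from Coleman, Evans and Gray---and your back-and-forth argument is essentially the one in that cited source, so there is nothing to add.
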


\begin{notation}
If $v$ is a vertex in a graph $G$, then $N(v)$ is the set $\{g\in G: G\models v\sim g\}$; similarly $\overline{N}(v)$ is $\{g\in G:\overline{G}\models g\sim v\}$.
\end{notation}

Given two graphs $G$ and $H$ with disjoint vertex sets, we can form the \emph{graph composite} or \emph{lexicographic product} of $G$ and $H$, denoted by $G[H]$, as follows: the vertex set is $G\times H$ and $(g,h)\sim(g',h')$ if $g\sim g'$ in $G$ or $g=g'$ and $h\sim h'$ in $H$. In $G[H]$, each set of the form $\{g\}\times H$ induces an isomorphic copy of $H$ and for any function $f:G\to H$, the set $\{(g,f(g)):g\in G\}$ with its induced subgraph structure in $G[H]$ is isomorphic to $G$. We will use $I_\kappa$ to denote an independent set of size $\kappa$.

We include the Lachlan-Woodrow theorem for completeness.
\begin{theorem}[Lachlan-Woodrow 1980]\label{thm:lw}
Let $G$ be a countably infinite ultrahomogeneous graph. Then $G$ or its complement is isomorphic to one of the following:
\begin{enumerate}
\item{$K_\omega$,}
\item{$I_\omega[K_n]$ for some $n\in\omega$,}
\item{$I_m[K_\omega]$ for some $m\in\omega+1$,}
\item{the Rado graph $R$, or }
\item{the universal homogeneous $K_n$-free graph, for some $n\in\omega, n\geq 3$.}
\end{enumerate}
\end{theorem}

\begin{proposition}\label{lem:disconnected}
Let $G$ be an IB-homogeneous graph. If $G$ or its complement is disconnected, then $G$ is ultrahomogeneous.
\end{proposition}
\begin{proof}
Suppose that $G$ is IB-homogeneous and disconnected.
\begin{claim}\label{claim:first}
Each connected component of $G$ is a clique.
\end{claim}
\begin{proof}
Suppose for a contradiction that there is a nonedge $x\not\sim y$ within a connected component $C$. Let $f$ be the local isomorphism fixing $x$ and sending $y$ to any $v$ in another connected component $D$. The image of a connected component is connected, so IB-homogeneity yields a contradiction: if $F$ is a global extension of $f$, then $v$ and $x$ are in the same connected component.
\end{proof}
\begin{claim}\label{claim:equipotent}
All connected components of $G$ are equipotent.
\end{claim}
\begin{proof}
Let $C$ and $D$ be two connected components, with $c\in C, d\in D$. The map $c\mapsto d$ extends to an injection $C\to D$, and likewise $d\mapsto c$ witnesses $|D|\leq |C|$. By the Cantor-Bernstein Theorem, $|C|=|D|$.
\end{proof}

A disconnected IB-homogeneous graph is therefore a disjoint union of equipotent cliques. Since we are only considering countable graphs, it follows that $G$ is isomorphic to $I_n[K_\omega]$ with $n\in\omega+1$ or $I_\omega[K_n]$ with $n\geq 2$. All such graphs are ultrahomogeneous, as are their complements, by the Lachlan-Woodrow theo\-rem. 
\end{proof}

We now turn our attention to connected IB-homogeneous graphs with connected complement. Our goal is to prove that any such graph that represent $\isotp{m}$ is MB-homogeneous, and the case analysis is given by the pairs $(\sigma(G),\sigma(\overline{G}))$. First, we eliminate the possibility of having finite star number in both $G$ and $\overline{G}$.

\begin{lemma}\label{lem:infstar}
If $G$ is an infinite graph that embeds arbitrarily large finite cliques and independent sets, then $\infty\in\{\oo{G},\oo{\overline{G}}\}$.
\end{lemma}
\begin{proof}
Suppose for a contradiction that $\oo{G}$ and $\oo{\overline{G}}$ are both finite, and choose an independent subset of $X\sqsubset G$ with $|X|>\oo{G}$, as well as a clique $Y\sqsubset G$ with $|Y|>\oo{\overline{G}}$. Since $G$ embeds arbitrarily large cliques and independent sets, we may assume without loss of generality that $X\cap Y=\varnothing$.

Now for all $v\in G\setminus(X\cup Y)$ there exists $x\in X$ and $y\in Y$ such that $v\not\sim x$ and $v\sim y$. In other words, for all $v\in G\setminus(X\cup Y)$ the sets $N(v)\cap(X\cup Y)$ and $\overline{N}(v)\cap(X\cup Y)$ are proper nonempty subsets of $X\cup Y$.

We define a partition of $G\setminus(X\cup Y)$ into finitely many sets. Given proper nonempty $U\subset X, V\subset Y$, define $$S_{U,V}=\{w\in G\setminus(X\cup Y): \overline{N}(w)\cap X=U \wedge N(w)\cap Y=V\}.$$ By the choice of $X$ and $Y$, every vertex from $G\setminus(X\cup Y)$ is in some $S_{U,V}$, and it is easy to see that $(U,V)\neq(U',V')$ implies $S_{U,V}\cap S_{U',V'}=\varnothing.$

Since $G$ is infinite, there exist proper nonempty $A\subset X$ and $B\subset Y$ such that $S_{A,B}$ is infinite. By Ramsey's theorem, $S_{A,B}$ contains an infinite clique or an infinite independent set.
\begin{enumerate}
\item{If $S_{A,B}$ contains an infinite clique, then any $a\in A$ contradicts $\sigma(\overline{G})<\infty$.}
\item{If $S_Y$ contains an infinite independent set, then any $b\in B$ contradicts $\sigma(G)<\infty$.}
\end{enumerate}

Either way we reach a contradiction. This proves the Lemma.
\end{proof}

The next two lemmas connect the star number of a connected IB-hom\-ogeneous graph with connected component to the properties \trg and \ctrg. 

\begin{lemma}\label{lem:infsigtrg}
If $G$ is an IB-homogeneous graph that represents $\isotp{m}$ and $\oo{G}=\infty$, then $G$ satisfies \trg.
\end{lemma}
\begin{proof}
Consider any $X\sqsubset G$. Since $\oo{G}=\infty$ and $G$ is IB-homogeneous, we know that any finite independent set has a cone. By Lemma \ref{lem:toclique}, $X$ is image of an independent set $I$ of size $|X|$ under a bimorphism $F\colon G\to G$. If $c$ is any cone over $I$, then $F(c)$ is a cone over $X$.
\end{proof}

\begin{lemma}\label{lem:finsigctrg}
If $G$ is a connected IB-homogeneous graph that represents $\isotp{m}$, has connected complement, and $\sigma(\overline{G})<\infty$, then $G$ satisfies \trg.
\end{lemma}
\begin{proof}
The condition $\sigma(\overline{G})<\infty$ means that given any vertex $v\in G$ and clique $C\subset G$, $|\overline{N}(v)\cap C|\leq\sigma(\overline{G})$. 

Consider any finite $X\sqsubset G$. Since $G$ embeds arbitrarily large complete graphs (Lemma \ref{lem:toclique}), there exist a clique $K\sqsubset G$ disjoint from $X$ with more than $\sigma(\overline{G})|X|$ vertices. Note that by the first line of this proof, $$\left|\bigcup\{\overline{N}(x)\cap K:x\in X\}\right|\leq\sigma(\overline{G})|X|,$$ so any element of $K\setminus\bigcup\{\overline{N}(x)\cap K:x\in X\}$ is a cone over $X$.
\end{proof}

The following proposition is the last piece of the puzzle.
\begin{proposition}\label{lem:infcodep}
If $G$ is a connected IB-homogeneous graph that represents $\isotp{m}$ and has connected complement, then $G$ is MB-homogeneous.
\end{proposition}
\begin{proof}
By Lemma \ref{lem:infstar} and the symmetry of the hypotheses (apply Lemma \ref{prop:complements} and Proposition \ref{prop:comprepm}), there are only two cases to consider:
\begin{description}
\item[Case 1]{Suppose that $\sigma(G)=\sigma(\overline{G})=\infty$. Then by Lemma \ref{lem:infsigtrg} both $G$ and $\overline{G}$ satisfy \trg, or , equivalently, $G$ satisfies \trg and \ctrg. By Fact \ref{fact:coleman}, $G$ is MB-homogeneous.}
\item[Case 1]{Now suppose that $\sigma(G)=\infty$ and $\sigma(\overline{G})<\infty$. Then by Lemma \ref{lem:infsigtrg} $G$ satisfies \trg, and by Lemma \ref{lem:finsigctrg} $\overline{G}$ satisfies \trg. Equivalently, $G$ satisfies \trg and \ctrg, and is MB-homogeneous by Fact \ref{fact:coleman}.}
\end{description}
\end{proof}

\begin{theorem}\label{thm:ibclass}
If a countable graph is IB-homogeneous, then it is MB-homo\-geneous or ultrahomogeneous.
\end{theorem}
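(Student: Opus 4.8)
The machinery required for this theorem has already been assembled in the preceding lemmas, so my plan is to prove it by a short case analysis organized around two dichotomies. The first, and decisive, split is on whether $G$ represents the monomorphism $m$ that sends a nonedge to an edge; in the case where it does, I would split a second time according to whether the star number $\oo{G}$ is finite or infinite.

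First I would handle the branch where $G$ does not represent $m$. Here Observation \ref{obs:ultrahom} applies verbatim: if an IB-homogeneous graph fails to represent $m$, then all of its bimorphisms are automorphisms and every local isomorphism extends to an automorphism, so $G$ is ultrahomogeneous. This disposes of one of the two target classes immediately, with no further work.

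In the complementary branch, $G$ represents $m$, and I would finish using the two lemmas tailored to this situation. If $\oo{G}=\infty$, then Lemma \ref{lem:infcodep} gives that $G$ is MB-homogeneous; if instead $\oo{G}$ is finite, then Lemma \ref{lem:ctrg} delivers the same conclusion. Since ``$\oo{G}$ infinite'' and ``$\oo{G}$ finite'' are mutually exclusive and exhaustive, every IB-homogeneous graph that represents $m$ is MB-homogeneous.

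Combining the two top-level branches---$G$ either represents $m$ or it does not---covers all possibilities, and in each case $G$ has been placed in one of the two desired classes, completing the argument. The genuine content of the result lives entirely in the lemmas, above all in verifying Properties \trg and \ctrg in order to invoke Fact \ref{fact:coleman}; at the level of the theorem itself the only thing that needs care is confirming that the case split is complete, so I expect no real obstacle here beyond bookkeeping.
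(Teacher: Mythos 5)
Your proposal is correct and is essentially identical to the paper's own proof: the paper also splits on whether $G$ represents $m$, invoking Observation \ref{obs:ultrahom} in the negative case and Lemmas \ref{lem:infcodep} and \ref{lem:ctrg} (which together cover the finite/infinite star-number dichotomy) in the positive case.
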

\begin{proof}
If a countable IB-homogeneous graph $G$ does not represent $\isotp{m}$, or if $G$ or its complement is disconnected, then $G$ is ultrahomogeneous, by Observation \ref{obs:ultrahom} and Proposition \ref{lem:disconnected}. And if both $G$ and $\overline{G}$ are connected and $G$ represents $\isotp{m}$, then $G$ is MB-homogeneous by Proposition \ref{lem:infcodep}.
\end{proof}

Given that we know all MB-homogeneous graphs up to bimorphism-equi\-valence (these are four ultrahomogeneous graphs and all graphs that are bi\-morphism-equivalent to the Rado graph, see \cite{ARANDA2020103063}) and all ultrahomogeneous graphs up to isomorphism, Theorem \ref{thm:ibclass} is a classification of IB-homogeneous graphs. More explicitly,

\begin{corollary}
If a countable graph is IB-homogeneous, then it is either ultrahomogeneous or bimorphism-equivalent to the Rado graph.
\end{corollary}

\section{Closing comments and open problems}
As we mentioned in the introduction, this paper started as a case study to find a \Fraisse theorem for IB-homogeneous structures. The notion of represented monomorphism can be extended to the general relational case, and in a separate paper (\cite{arandaimfr}, in preparation) we prove that it is possible to identify the monomorphisms that will be represented by analysing the age alone, that is, without looking at any homogeneous limit. Those so-called \emph{age-extensible} monomorphisms are the basis for the \Fraisse theorems in IB- and IM-homogeneous structures. 

But the notions and methods from Section \ref{sec:represented} can be pushed even further, to the point where we can identify and prove \Fraisse theorems in a uniform way for all 18 classes of homomorphism-homogeneous structures defined by Lockett and Truss. This, and an additional \Fraisse theorem for polymorphism-homogeneous structures, are the subjects of another paper in preparation (joint work with Maja Pech and Bojana Panti\'c). 

The proof of Theorem \ref{thm:ibclass} presented in this paper is admittedly more intricate than expected, considering how easy it is to prove that an IB-homogeneous graph that does not represent $\isotp{m}$ is ultrahomogeneous. Intuitively, if $\isotp{m}$ is represented and $G$ is IB-homogeneous, then we should be able apply $\isotp{m}$ to add edges one by one (or as few as possible at a time, depending on the contents of $\age{G}$) using to any $X\sqsubset G$, so that any monomorphism can be proved to be represented in $G$. I would be very interested if anyone found a proof of Theorem \ref{thm:ibclass} that follows this approach.   

For general relational structures, it is not true that IB is the union of IA and MB. Our final example is an IB-homogeneous multigraph with coloured edges that is neither ultrahomogeneous nor MB-homogeneous.

\begin{example}\label{ex:3}
Let $H_3=(M;R)$ denote the universal homogeneous triangle-free graph. We will expand $H_3$ by a new edge relation $G$, so that the resulting structure is IB-homogeneous, but neither IA- nor MB-homogeneous. 

Given $X\sqsubset M$ and $v\in M\setminus X$, let $k_{X,v}$ be the function $X\to 2$ defined by $$k_{X,v}(x) = \begin{cases} 0, &\text{if } (M; R)\models\neg R(x,v)\\ 1,&\text{if } (M; R)\models R(x,v).\end{cases}$$

By $\omega$-categoricity of $H_3$, there are only finitely many distinct functions $k_{X,v}$ for each $X\sqsubset M$. Additionally, it follows from the axioms of $H_3$ (see Example \ref{ex:2}, item 1) that for each $X\sqsubset H_3$ and $v\in H_3\setminus X$ there exist infinitely many $v'\in M$ such that $k_{X,v}=k_{X,v'}$.

We need an enumeration $M=\{a_i:i\in\omega\}$ in which for all $X\subset M$ and $v\in M\setminus X$, the sets $\{j\in\omega:j\text{ is odd and } k_{X,a_j}=k_{X,v}\}$ and $\{j\in\omega:j\text{ is even and } k_{X,a_j}=k_{X,v}\}$ are both infinite. Fix a bijection $Q\colon M\to\omega$. We proceed by induction:
\begin{description}
\item[Step 1:]{Let $a_0\coloneqq Q^{-1}(0)$ and  $B_1\coloneqq\{a_0\}$. Over $B_1$ there are only two functions $k_{B_1,v}$, corresponding to neighbours and non-neighbours of $a_0$ in $H_3$; choose the two distinct neighbours of $a_0$ with smallest even and odd value under $Q$ as $a_1, a_2$ and the two distinct non-neighbours of $a_0$ with smallest even and odd value under $Q$ as $a_3$ and $a_4$, and let $B_2 = \{a_0,a_1,a_2,a_3,a_4\}$. }
\item[Step $\mathbf{n+1}$:]{Now suppose that we have succeeded in constructing $B_n$ so that for each function in $\{k_{B_{n-1},v}:v\in H_3\setminus B_n\}$, the set $B_n$ contains elements with odd and even index defining the same function. Enumerate the functions $k_{B_n,v}$ as $k_0, \ldots, k_m$. For each $i\in m$, let $a_{n+2i+1}$ and $a_{n+2i+2}$ be the vertices with smallest even and odd value under $Q$ that have not been enumerated yet and have $k_{B_n,v}=k_i$; define $B_{n+1}$ as the union of $B_n$ and the set of all the vertices chosen in this step. This process extends the partial enumeration and preserves the property at the beginning of this paragraph. }
\end{description}

After $\omega$ steps, we have an enumeration $M = \{a_i:i\in\omega\}$. From this point on, $A_n$ denotes the set $\{a_i:i<n\}$.

\begin{claim}\label{cl:niceenum}
For any $X\sqsubset H_3$ and $v\in H_3\setminus X$, the sets $$\{j\in\omega:j\text{ is odd and } k_{X,a_j}=k_{X,v}\}$$ and $$\{j\in\omega:j\text{ is even and } k_{X,a_j}=k_{X,v}\}$$ are both infinite.
\end{claim}
\begin{proof}
Let $k$ denote any $k_{X,v}$. The set $X$ is contained in $A_{\max\{i\in\omega:a_i\in X\}}$, which is itself contained in some $B_m$. By construction, we know that $B_{m+1}$ contains vertices $w$ with even and odd index such that $k_{B_m,w}\upharpoonright X = k_{X,w}=k_{X,v}$. Suppose for a contradiction that for only finitely many $j\in\omega$, $a_{2j}$ satisfies $k_{X,a_{2j}}=k_{X,v}$.

Let $t$ be the least natural number such that $a_s\in B_t$, where $s=\max\{j\in\omega:k_{X,a_j}=k_{X,v}\}$ . By the axioms of $H_3$, there exists some $y\in H_3\setminus B_t$ such that $$k_{B_t,y}(u)=\begin{cases}1,&\text{if }k_{X,a_s}=1,\\ 0,&\text{otherwise.}\end{cases}$$ By construction, there exists an even $s'>s$ such that $k_{X,a_{s'}}=k_{X,v}.$ This contradicts the definition of $s$, proving the claim (the same argument can be used for odd indices).
\end{proof}

We are now ready to add edges of type $G$. Let $p\colon\omega\to 2$ be the function mapping all even numbers to 0 and all odd numbers to 1. Given $m,n\in\omega$, $G(a_m,a_n)$ and $G(a_n,a_m)$ hold if $m\neq n$ and $p(\max\{m,n\})=1$. 

\begin{claim}
The multigraph $(M; R,G)$ is IB-homogeneous.
\end{claim}
\begin{proof}
Let $e_0\colon X_0\to Y_0$ be an isomorphism, with $X_0,Y_0\sqsubset M$. Suppose that $e_n\colon X_n\to Y_n$ is a monomorphism extending $e_0$ (we allow the case $n=0$). Let $s$ be the least index such that $e_n$ is not defined on $a_s$. Consider the function $f_s\colon Y\to 2$ given by $$f_s(e_n(x))=\begin{cases}0,&\text{if }k_{X_n,a_s}(x)=0\\ 1, &\text{if }k_{X_n,a_s}(x)=1\end{cases}$$ By Claim \ref{cl:niceenum}, there exist infinitely many odd indices $j$ such that $k_{Y,a_j}=f_s$. Let $t$ be the least such index such that $a_t\notin Y$, and $e_{n+1}=e_n\cup\{(a_s,a_t)\}$; let $X_{n+1}=X_n\cup\{a_s\}$ and $Y_{n+1}=Y\cup\{a_t\}$. By the definition of the relation $G$, $e_{n+1}$ is a monomorphism.

We follow the same pattern to find preimages for any vertex outside $Y_n$. If $e_n\colon X_n\to Y_n$ is a monomorphism and $a_u$ is the vertex with least index such that $a_u\notin Y_n$, then we apply Claim \ref{cl:niceenum} to find an even index $v$ such that $a_v\notin X_n$ and whose $R$-neighbours in $X_n$ are the preimages under $e_n$ of the neighbours of $a_u$ in $Y_n$. Extend $e_n$ as $e_{n+1}=e_n\cup\{(a_v,a_u)\}$, and let $X_{n+1}=X_n\cup\{a_v\}, Y_{n+1}=Y_n\cup\{a_u\}$

Now $E=\bigcup\{e_n:n\in\omega\}$ is a bijective endomorphism of $(M; R,G)$ that extends $e_0$.
\end{proof}

Notice that the reduct $(M; E)$ is ultrahomogeneous and $(M;G)$ is MB-homogeneous but not ultrahomogeneous. To prove the last assertion, notice that the graph $(M;G)$ is connected with connected component and embeds an infinite clique, but is not isomorphic to the Rado graph because it is $C_4$-free, so it cannot be ultrahomogeneous by the Lachlan-Woodrow theorem. 

The full structure $(M; R,G)$ is not MB-homogeneous because the monomorphism mapping a nonedge to an edge of type $R$ cannot be extended to a monomorphism, by the same argument from Example \ref{ex:2} item 2 applied to the $R$-edges. 

To see that $(M; R,G)$ is not ultrahomogeneous, notice that there must be pairs of even indices $i>i'$ and $j>j'$ with $j-j'>i-i'$ such that $\neg R(a_i,a_i')$ and $\neg R(a_j,a_j')$ (this follows from $K_3$-freeness of $(M; R)$ and the fact that $\{a_{2j}:j\in\omega\}$ contains infinitely many neighbours of $a_0$). Then the local isomorphism $j\mapsto i, j'\mapsto i'$ cannot be extended to an automorphism of $(M; R,G)$ because the number of vertices with odd index between $i$ and $i'$ is smaller than the nuber of vertices with odd index between $j$ and $j'$: at least one vertex with an odd index $\ell$ between $j$ and $j'$ has to be mapped to some $a_p$, with $p$ odd and greater than $i$, thus introducing new $G$-edges.
\end{example}

Example \ref{ex:3}  shows that superpositions of IA- and MB-homogeneous structures can produce IB-homogeneous structures. The \Fraisse theorem for IB-homogeneous structures allows us to adapt the definition of free superposition of homogeneous structures (see for example \cite{bodirsky2015ramsey}), so that the careful enumeration of the universal homogeneous triangle-free graph in Example \ref{ex:3} becomes unnecessary. This motivates the following two questions.
\begin{question}
Is it true that the free superposition of an ultrahomogeneous structure and an MB-homogeneous structure is an IB-homogeneous structure? (My guess: yes, and probably easy.)
\end{question}

\begin{question}
Conversely, is it true that if $M$ is an IB-homogeneous $L$-structure, then there exist reducts $M_0, M_1$ of $M$ such that $M_0$ is ultrahomogeneous and $M_1$ is MB-homogeneous? (My guess: no.)
\end{question}

Finally, a question about when we can guarantee that IB=IA$\cup$MB.
\begin{question}
Is it true that for all languages $L$ containing only one relation, every countable IB-homogeneous $L$-structure is ultrahomogeneous or MB-homogeneous? If not, then what is the least arity such that a single-relation language has countable IB-homogeneous structures that are neither ultrahomogeneous nor MB-homogeneous?
\end{question}

\section{Acknowledgements}
Many thanks to the anonymous referee for a careful reading of the paper and for their useful suggestions. I also thank Jan Hubi\v{c}ka and David Bradley-Williams, who patiently listened to preliminary and often ranting versions this work.

A good portion of the research in this paper was completed while I was a postdoc at the Institut f\"ur Algebra, Technische Universit\"at Dresden, funded by the ERC under the European Union's Horizon 2020 Research and Innovation Programme (grant agreement No. 681988, CSP-Infinity). Additional research, examples, and editing were completed at KAM, funded by project 18-13685Y of the Czech Science Foundation (GA\v{C}R).

\bibliography{morph}
\end{document}